\newtheorem{theorem}{Theorem}[section]
\newtheorem{lemma}[theorem]{Lemma}
\newtheorem{corollary}[theorem]{Corollary}
\theoremstyle{definition}
\newtheorem{definition}[theorem]{Definition}
\newtheorem{mainexample}[theorem]{Main Example}
\theoremstyle{remark}
\newtheorem{remark}[theorem]{Remark}
\numberwithin{equation}{section}
\newcommand{\blankbox}[2]{%
  \parbox{\columnwidth}{\centering
    \setlength{\fboxsep}{0pt}%
    \fbox{\raisebox{0pt}[#2]{\hspace{#1}}}%
  }%
}
\def\Z{\mathbb{Z}}
\def\Q{\mathbb{Q}}
\def\R{\mathbb{R}}
\def\C{\mathbb{C}}
\def\H{\mathbb{H}}
\def\iso{\cong}
\def\t{\mathfrak{t}}
\def\inv{^{-1}}
\newcommand{\excise}[1]{}
\begin{document}

\title{Weighted hyperprojective spaces and homotopy invariance in orbifold cohomology}
\date{\today}
\author{Rebecca Goldin}
\address{Mathematical Sciences, George Mason University, Fairfax, VA 22030}
\email{rgoldin@math.gmu.edu}
\thanks{The author was supported in part by NSF Grants \#0305128 and \#0606869.}


\subjclass{Primary 53D20; Secondary 55N91}
\date{May 31, 2007}


\keywords{hyperk\"ahler, toric, projective space}

\begin{abstract}
We show that Chen-Ruan cohomology is a homotopy invariant in certain cases. We introduce the notion of a {\em $T$-representation homotopy} which is a stringent form of homotopy under which Chen-Ruan cohomology is invariant. We show that while hyperk\"ahler quotients of $T^*\C^{n+1}$ by $S^1$  (here termed {\em  weighted hyperprojective spaces}) are homotopy equivalent to weighted projective spaces, they are not $S^1$-representation homotopic. Indeed, we show that their Chen-Ruan cohomology rings (over $\Q$) are distinct.
\end{abstract}

\maketitle

\section*{Introduction}

In the toric topology conference held at Osaka University in 2006, the author spoke about joint results in \cite{GH} with M. Harada. In that paper, we compute the Chen-Ruan cohomology ring over $\Q$ defined in \cite{CR} for hypertoric varieties (similar results were also found by \cite{JT} using entirely different techniques). Our results relied heavily on methods developed in \cite{GHK} and also on results due to H. Konno \cite{Ko} and T. Hausel and B. Sturmfels \cite{HS} about the topology of these varieties. In this note, we review these and some of the results discussed at the conference. We then apply them to describe combinatorially the Chen-Ruan cohomology of the case of a hyperk\"ahler reduction by a linear and hyperk\"ahler $S^1$ action on a vector space, which we term a {\em weighted hyperprojective space}.

A secondary goal of this paper is to explain a sense in which Chen-Ruan cohomology is homotopy invariant, and to illustrate that Chen-Ruan cohomology fails to be invariant under a naive notion of homotopy equivalence for global quotients. In general, homotopy invariance is tricky, since orbifolds are only locally defined as quotients. Any map between orbifolds must not only commute with the local group structure, but also preserve global topological properties (see \cite{Ch} for a notion of homotopy groups, \cite{Mo} for a groupoid treatment). Depending on the presentation of these orbifolds, it may be difficult to identify ``good" maps between orbifolds, in the sese of \cite{CR1}. However, even in the global quotient case, equivariant homotopies do not guarantee that the Chen-Ruan cohomology is preserved. We introduce the notion of a {\em $T$-representation homotopy} and show that Chen-Ruan cohomology (and inertial cohomology) is preserved under such a homotopy.

The case of weighted hyperprojective spaces is particularly useful to describe the difficulty involved with homotopy equivalence. These spaces retract to weighted projective spaces \cite{BD}, yet their Chen-Ruan cohomologies are not equal. This is the case even though this homotopy can  be described ``upstairs" on manifolds, before taking a quotient by $S^1$. The existence of the homotopy implies that the (ordinary or equivariant) cohomology ring -- even the integral cohomology ring -- of a hyperprojective space is known to be the cohomology of the corresponding weighted projective space. While the {\em groups} occurring in Chen-Ruan cohomology are homotopy invariant because they are cohomology groups, the twisted product introduced in \cite{CR} is not invariant under (ordinary or equivariant) homotopy.

\excise{Hypertoric varieties are homotopic to their core, as shown in  and described in this proceedings by REFERENCE HERE NICK'S PAPER IN THIS PROCEEDINGS. In the case of a weighted hyperprojective space $\mathfrak{M}$, the core $C(\mathfrak{M})$ is a weighted projective space, and the underlying space of the weighted hyperprojective space retracts to this core.}

Weighted projective spaces may be described by the quotient of $S^{2n+1}$ by an appropriate $S^1$ action. Let $S^1$ act on $S^{2n+1}=\{(z_0,\dots, z_n)\in \C^{n+1}|\ |z_i|^2=1\}$ by $t\cdot (z_0,\dots, z_n)= (t^{b_0}z_0,\dots,t^{b_n} z_n)$ for some nonnegative integers $b_i$. Then $\C P^n_{(b)}:= S^{2n+1}/S^1$ is the quotient space.  The (ordinary) cohomology $H^*(\C P^n_{(b)};\Z)$ is the cohomology of the underlying topological space, which was computed by Kawasaki \cite{Ka}. In contrast, the cohomology of the {\em orbifold} $[\C P^{n}_{(b)}]$ is by definition $H^*([\C P^n_{(b)}];\Z) := H_{S^1}(S^{2n+1};\Z)$. \excise{The brackets are used to emphasize the added information of the orbifold with its presentation as a quotient.} The cohomology of the underlying space and that of the orbifold are isomorphic over $\Q$, but not over $\Z$.  To describe this cohomology, one can use symplectic techniques and Kirwan surjectivity (see \cite{Ho} for an explicit computation).

In sharp contrast to ordinary cohomology, the Chen-Ruan cohomology of $[\mathfrak{M}]$ is not isomorphic to that of its core $C(\mathfrak{M})$. Chen-Ruan cohomology is not a homotopy invariant, even over $\Q$. In Section~\ref{se:computationCRweightedhyperprojective} we explicitly compute this twisted cohomology and note its disagreement with the Chen-Ruan cohomology of weighted projective spaces. In Section~\ref{se:homotopy} we show that Chen-Ruan cohomology is invariant under $T$-representation homotopy, a stringent form of homotopy.

This article is organized as follows: inertial cohomology is defined in Section~\ref{se:inertial}; its relation to Chen-Ruan cohomology of symplectic and hyperk\"ahler (abelian) quotients is described in Section~\ref{CRcohomology}.\excise{ the construction of hypertoric varieties is reviewed in Section~\ref{se:hypertoric}; its salient properties are discussed in Section~\ref{se:core}.} Finally, the main results of the paper are found in Sections~\ref{se:computationCRweightedhyperprojective} and \ref{se:homotopy}.

It was a particular pleasure to participate in the International Conference on Toric Topology last spring in Osaka. The talks were of unusually high quality, and the conference could not have been better organized. I would especially like to thank
Megumi Harada, Taras Panov, Yael Karshon and Mikiya Masuda for providing such an inviting environment for mathematics.

\section{Inertial cohomology}\label{se:inertial}

Here we briefly describe the {\em inertial cohomology} of a stably almost complex manifold $Y$ with an action by a torus $T$, introduced in \cite{GHK}. When $Y$ has a locally free action by $T$ -- i.e. $T$ acts with finite isotropy on $Y$ --  the inertial cohomology $NH_T^{*,\diamond}(Y)$ equals (as a ring) the Chen-Ruan cohomology of $[Y/T]$, as defined in \cite{CR}. On the other hand, when $Y$ is Hamiltonian or hyperhamiltonian, its inertial cohomology has special properties that lend themselves to easy computations. In some circumstances the inertial cohomology of a (hyper)hamiltonian space $Y$ surjects onto that of a level set of the moment map, which in turn is isomorphic the Chen-Ruan cohomology of the orbifold given by the level set quotiented by $T$.

\excise{ Inertial cohomology has the advantage of being defined for spaces $Y$ that do not have locally free $T$ actions, such as those occurring in the Hamiltonian and hyperhamiltonian contexts. Even more importantly, under the appropriate conditions, an inclusion of spaces induces a restriction map in inertial cohomology; for a Hamiltonian or hyperhamiltonian $T$-space, inertial cohomology  restricts to the inertial cohomology of the level set of the relevant moment map, which is isomorphic to the Chen-Ruan cohomology of the corresponding symplectic or hyperk\"ahler quotient. This provides new methods for computing the Chen-Ruan cohomology of these orbifolds.
}
\begin{remark} The space $Y$ need not be honestly almost complex; the inertial cohomology of $S^1$ acting appropriately on $Y=S^3$ equals the Chen-Ruan orbifold cohomology of the quotient, a (possibly weighted) projective space $S^3/S^1$. However, when $Y$ is stably almost complex, the normal bundles $\nu(Y^{g,h}\subset Y^g)$ are honestly complex, for all $g,h\in T$.
\end{remark}

\begin{remark}
Clearly not all orbifolds may be expressed as a quotient $[Y/T]$ with $T$ an abelian Lie group. However, the set of orbifolds that can be presented this way include spaces that are not quotients by finite groups, such as weighted projective spaces.
\end{remark}
\excise{In the case that $Y$ is a vector space, the inertial cohomology simplifies further. Theorems~\ref{th:GHK} and \ref{th:GH} specify how the techniques introduced in this section (and, in more detail, in \cite{GHK} and \cite{GH}) apply to the cases of toric varieties and hypertoric varieties, respectively. These spaces are symplectic and hyperk\"ahler, respectively, quotients of vector spaces by linear torus actions.
}

As a module over $H_T^*(pt;\Z)$, inertial cohomology is defined by
$$
NH_T^{*,\diamond}(Y;\Z):=\bigoplus_{g\in T} H_T^*(Y^g;\Z) = \bigoplus_{g\in T} H^*([Y^g/T];\Z),
$$ where $Y^g$ is the fixed point set of $g$ acting on $Y$, and $H_T^*(Y)$ is the {\em equivariant cohomology} of $Y$.\footnote{Let $ET$ be a contractible space with a free $T$ action on it, and let $Y_T :=(Y\times ET)/T$ be the associated Borel homotopy quotient. Then $H_T^*(Y;\Z) := H^*(Y_T;\Z)$ is the singular cohomology of $Y_T$.} The inertial cohomology is defined analogously for other coefficient rings, and we suppress the coefficient ring when it is irrelevant. \excise{The grading on this vector space is not the same as that of equivariant cohomology; it is graded by real numbers, rather than integers (see \cite{GHK}). The grading $\diamond$ is in the group elements; for any $g,h\in T$, the product of elements $a\in NH_T^{*,g}(Y)$ and $b\in NH_T^{*,h}(Y)$ is $a\smile b \in NH_T^{*,gh}(Y).$} See \cite{GHK} for details on the grading.

The product on $NH_T^{*,\diamond}(Y)$ is defined as follows. Choose $g,h\in T$, and let $H = \langle g,h\rangle$ be the subgroup they generate. Then $Y^H$ is a submanifold of
$Y$, and the normal bundle \(\nu(Y^H)\) of $Y^H$ in $Y$ is naturally
equipped with an $H$-action on the fibers. Let $X$ be a connected component of $Y^H$
and $\nu(X):= \nu(Y^H)|_X$ be the restriction of the normal bundle to
$X$. We may decompose \(\nu(X)\) into isotypic components with respect
to the $H$-action:
$$
\nu(X) = \bigoplus_{\lambda\in\hat{H}} I_\lambda,
$$ where $\hat{H}$ denotes the character group of $H$.

\begin{definition}\label{def:logweight}
Let $\lambda \in \hat{H}$ and $t \in H$.
The {\bf logweight of $t$ with respect to $\lambda$}, denoted $a_{\lambda}(t)$,
is the real number in $[0,1)$
such that $\lambda(t) = e^{2\pi i
a_\lambda(t)}$.
\end{definition}

We need one other ingredient before defining the product.
\begin{definition}\label{def:obstructionbundle}
The {\bf obstruction bundle} is a vector bundle over each component
$X$ of $Y^{H}$ given by
$$ E|_X := \bigoplus_{\lambda\in \hat{H} \atop
a_\lambda(g)+a_\lambda(h)+a_\lambda((gh)^{-1})=2} I_\lambda.
$$
We write $E\rightarrow Y^H$ to denote the union $E|_X$ over all connected
components.\footnote{Note that the sum $a_\lambda(g)+a_\lambda(h)+a_\lambda((gh)^{-1})$ is always 0,1, or 2, and it is constant on a connected component of $Y^H$.} The dimension may vary over components. The {\bf virtual
fundamental class} $\epsilon\in H_T^*(Y^H)$ is given by
$$
\epsilon := \sum_{X \in \pi_0(Y^H)} e(E|_X)\in H_T^*(Y^H),
$$
where the sum is over connected components $X$ of $Y^H$ and $e(E|_X)$ is the $T$-equivariant Euler class of $E|_X$, considered as an element of $H_T^*(X)$.
\end{definition}
Finally, let $e_1: Y^H \rightarrow Y^{g}$ , $e_2: Y^H\rightarrow Y^h$ and $\overline{e}_3:Y^H \rightarrow Y^{gh}$ denote the natural inclusions. These induce pullbacks $e_1^*:H_T^*(Y^{g})\rightarrow H_T^*(Y^H)$ and $e_2^*:H_T^*(Y^{h})\rightarrow H_T^*(Y^H)$
 and the pushforward $(\overline{e}_3)_*: H_T^*(Y^H)\rightarrow H^*_T(Y^{gh})$.
Let $a\in NH_T^{*,g}(Y)$ and $b\in NH_T^{*,h}(Y)$ be homogeneous classes in $\diamond$. Then we define
\begin{equation}\label{eq:inertialproduct}
a\smile b:= (\overline{e}_3)_*(e_1^*(a)\cdot e_2^*(b)\cdot \epsilon) 
\end{equation}
where the products on the right hand side are computed in the usual product structure of $H^*_T(Y^H)$. Note that the result lies in $NH^{*,gh}_T(Y)$.  By linear extension, the product is defined for any two classes $a,b\in NH^{*,\diamond}_T(Y)$.

It is immediately clear from this definition that, for any subgroup $\Gamma\subset T$, the product structure restricts to $NH^{*,\Gamma}_T(Y):=\oplus_{g\in\Gamma} NH^{*,g}_T(Y)$, making this set a ring as well. We call this the {\bf $\Gamma$-subring of the inertial cohomology}.

\begin{remark}
In \cite{GHK} we also introduce the product $\star$ on $NH_T^{*,\diamond}(Y)$. In the case that $Y$ is a Hamiltonian $T$ space, $\star$ and $\smile$ coincide. The $\star$ product has some computational advantages and makes associativity all but obvious. The corresponding combinatorics are used extensively in \cite{GHK} and in \cite{GH} to describe the cohomology of the corresponding quotient spaces as the inertial cohomology of $Y$ modulo an ideal, all of which can be described using $\star$.

However, the $\star$ product obscures the ring map to the inertial cohomology of a level set $L$ of the hyperk\"ahler moment map. Indeed, the product is 0 on the inertial cohomology of any space with no fixed points. In particular, it does not coincide with the product introduced by Chen and Ruan on the quotient orbifold.
\end{remark}

\excise{
As we shall see in Section~\ref{CRcohomology}, the Chen-Ruan cohomology of a hypertoric variety can be computed as a quotient of the inertial cohomology of a vector spaces under a particular $T$ action by a certain (computable) ideal.} In the case of an $S^1$-action, this inertial cohomology can be computed as follows. We follow notation introduced in \cite{Ho}.

\begin{mainexample}\label{mex:inertial} Let $S^1$ act on $\C^{n+1}\oplus \C^{n+1}$ by
$$
t\cdot (z_0,\dots, z_n,w_0,\dots, w_n) = (t^{b_0}z_0,\dots, t^{b_n}z_n,t^{-b_0}w_0,\dots, t^{-b_n}w_n),
$$ where $b_i\geq 0.$ Let $\Gamma$ be the group generated by the finite stabilizers occurring under this action. Then $\Gamma\iso \Z/\ell\Z$ with $\ell:=lcm(b_0,\dots, b_n)$, and the $\Gamma$-subring of the inertial cohomology is given by
\begin{align*}
NH_{S^1}^{*,\Gamma}(\C^{n+1}\oplus \C^{n+1};\Z) = \Z[u,\alpha_0,\dots, \alpha_{\ell-1}]/\mathcal{I}, \quad\mbox{with}\\
\mathcal{I}=\langle\alpha_g\smile\alpha_h-
\alpha_{[g+h]}\prod_{i=0}^n (b_iu)^{\left([b_ig]+[b_ih]-[b_i(g+h)]\right)/\ell}\\
\cdot (-b_iu)^{\left([-b_ig]+[-b_ih]-[-b_i(g+h)]\right)/\ell}\rangle
\end{align*}
for all $g,h\in \Gamma$, where $[m]$ is the smallest nonnegative integer congruent $m$ modulo $\ell$. Note that, for any $i$, these exponents are equal when $[b_i(g+h)]=0,$ and otherwise only one of the two exponents will be nonzero.
\end{mainexample}
\begin{proof}
Note that the group $\Gamma$ is given by the $\ell^{th}$ roots of unity, where $\ell=lcm(b_0,\dots, b_n)$. We identify $\Gamma\iso \Z/\ell\Z$ with these roots. Then as a module over $H_{S^1}^*(pt;\Z)$,
$$
NH_{S^1}^{*,\Gamma}(\C^{n+1}\oplus \C^{n+1};\Z) = \bigoplus_{g\in \Gamma} H_{S^1}^*((\C^{n+1}\oplus \C^{n+1})^g;\Z).
$$
 For any $g\in \Gamma$, we note that $(\C^{n+1}\oplus \C^{n+1})^g = \oplus_{i\in S_g} \C_i\oplus \C_{n+1+i},$ where $S_g = \{i\in \{0,\dots, n\}|\ b_i g = 0 \mbox{ in } \Z/\ell\Z\}$ and $\C_i$ indicates the $i^{th}$ copy of $\C$ in $\C^{n+1}=\oplus_{i=0}^n\C_i$. In particular, all the fixed points sets are equivariantly homotopic to a point. Thus $H_{S^1}^*((\C^{n+1}\oplus \C^{n+1})^g;\Z)\cong H_{S^1}^*(pt)\cong \Z[u]$ for each $g\in \Gamma$. Thus as a $\Z[u]$-module, $NH_{S^1}^{*,\diamond}(\C^{n+1}\oplus \C^{n+1})$ is free with one generator for each element in $\Gamma$. We denote the $\Z[u]$-module generators by $\alpha_0,\dots, \alpha_{\ell-1}$. In other words, $\alpha_{[g]}$ is the symbol we use to denote ``1" in $H_{S^1}^*((\C^{n+1}\oplus \C^{n+1})^g)$.

A fiber of the normal bundle to the $g$-fixed point set is isomorphic to $\oplus_{i\in (S_g)^c} \C_i\oplus\C_{n+1+i}.$ For any $g,h\in \Gamma$, the obstruction bundle restricted to $(\C^{n+1}\oplus \C^{n+1})^{g,h}$ consists of lines $\C_i$ such that
\begin{equation}\label{eq:sumis2}
\frac{[b_ig]}{\ell}+\frac{[b_ih]}{\ell}+\frac{[b_i([-(g+h)])]}{\ell}=2,
\end{equation} \excise{where $[\ ]$ denotes the smallest nonnegative integer modulo $\ell$,} as well as those $\C_{n+1+i}$ such that
\begin{equation}\label{eq:sumis2'}
\frac{[-b_ig]}{\ell}+\frac{[-b_ih]}{\ell}+\frac{[b_i([(g+h)])]}{\ell}=2.
\end{equation}
Using the language of logweights, $\frac{[b_ig]}{\ell} = a_{b_i}(g)$. These are the lines whose equivariant Euler classes contribute to the virtual fundamental class. Notice that in order for the sum to be 2, the first two summands must have sum strictly greater than 1. In particular, none of the summands are 0. Note also that when $[bg]\neq 0$, we have $[-b_ig]=\ell-[b_ig]$. This implies that either Equation~(\ref{eq:sumis2}) or (\ref{eq:sumis2'}) holds, but not both (and possibly neither).

 On the other hand, the fiber of the normal bundle to $(\C^{n+1}\oplus \C^{n+1})^{g,h}$ in $(\C^{n+1}\oplus \C^{n+1})^{g+h}$ consists of those pairs $\C_i\oplus \C_{i+n+1}$ that are fixed by $g+h$ but not by both $g$ and $h$. This implies $g$ and $h$ satisfy
\begin{equation}\label{eq:pushforwardcontribution}
[b_ig]\neq 0, \quad [b_ih]\neq 0,\quad [b_i[g+h]]=0, \mbox{ implying } [b_ig]+[b_ih]=\ell.
\end{equation}
These are the planes contributing to the pushforward map $(\overline{e}_3)^*$.
 We let $R_{g,h}:= \{i\in\{0,\dots, n\}| \C_i\mbox{ occurs in }E\}$ and $R'_{g,h}:=\{i\in\{0,\dots, n\}| \C_{n+1+i}\mbox{ occurs in }E\}$. Then $R_{g,h}\cup R'_{g,h}$ are the set of indices that occur in the obstruction bundle. Let $R''_{g,h}$ be the set of $i\in \{0,\dots, n\}$ satisfying Equations (\ref{eq:pushforwardcontribution}). Note that $(R_{g,h}\cup R'_{g,h})\cap R''_{g,h}=\emptyset$.

For any pair $g,h$, the equivariant Euler class of the obstruction bundle is $\prod_{j\in R_{g,h},k\in R'_{g,h}} (b_ju)(-b_ku)$. On the other hand, the pushforward in Formula (\ref{eq:inertialproduct}) contributes\footnote{Note that in general $[b_i(-(g+h))] \neq -[b_i(g+h)]$, as the number on the left is always nonnegative. Equations~\ref{eq:sumis2} and  \ref{eq:sumis2'}, and the exponents in the description of $\mathcal{I}$ are {\em not} taken modulo $\ell$.} an equivariant Euler class $\prod_{i\in R''_{g,h}} -b_i^2u^2.$ Thus,

\excise{We thus obtain}
\begin{align*}
\alpha_g\smile\alpha_h &= \left(\prod_{i\in R''_{g,h}, j\in R_{g,h},k\in R'_{g,h}} -b_i^2u^2(b_ju)(-b_ku)\right)\alpha_{[g+h]}.
\end{align*}

A case-by-case analysis shows that this expression agrees with the ideal $\mathcal{I}$ above. When $i$ satisfies $[b_i(g+h)]=0$, the exponents appearing in $\mathcal{I}$ are equal and either 0 or 1, depending on whether $i\in R''_{g,h}$ or not. For $[b_i(g+h)]\neq 0$, we have $$[b_ig]+[b_ih]-[b_i(g+h)]=[b_ig]+[b_ih]-(\ell-[-b_i(g+h)]) = [b_ig]+[b_ih]+[-b_i(g+h)]-\ell.$$
This sum is $\ell$ if and only if $[b_ig]+[b_ih]+[-b_i(g+h)]=2\ell$, or $i\in R_{g,h}$. Similarly, the sum $[-b_ig]+[-b_ih]-[-b_i(g+h)]=\ell$ if and only if $[-b_ig]+[-b_ih]+[b_i(g+h)]=2\ell$, or $i\in R'_{g,h}$.

Lastly we note that $\alpha_0=1$ because it is the generator of $H_{S^1}^*((\C^{n+1}\oplus \C^{n+1})^{id})$; for any class $\alpha\in NH_{S^1}^{*,\Gamma}(Y)$, we have $\alpha\smile \alpha_0=\alpha$ (note that $R_{g,id}\cup R'_{g,id}\cup R''_{g,id}=\emptyset$ for any $g$).
\end{proof}

\section{Chen-Ruan cohomology of global $T$-quotients}\label{CRcohomology}

In the case that $T$ acts on a space $Z$ locally freely, the corresponding inertial cohomology of $Z$ the equals the Chen-Ruan cohomology of the orbifold $[Z/T]$, i.e.
$$
NH_T^{*.\diamond}(Z) \cong H_{CR}^*([Z/T])
$$ where here the $*$-grading is the same for these two rings \cite{GHK}. Indeed, since the $T$ action has only finite stabilizers, $NH_T^{*,\diamond}(Z) =NH_T^{*,\Gamma}(Z)$, where $\Gamma$ is the subgroup of $T$ generated by the isotropy.
In contrast, the Chen-Ruan cohomology is not defined for Hamiltonian (or hyperhamiltonian) $T$-spaces, as these spaces always have fixed points. For this reason, inertial cohomology is a good tool to use in the symplectic and hyperk\"ahler categories. Let $Y$ be a Hamiltonian $T$ space with moment map $\Phi: Y\rightarrow \mathfrak{t}^*$. The symplectic reduction $Y/\!/T$ at a regular value $\alpha$ is defined by
$$
Y/\!/T(\alpha):= \Phi^{-1}(\alpha)/T,
$$ where we suppress $\alpha$ when it is understood. We say $Y$ is a {\em proper Hamiltonian $T$-space} if for some $\xi\in \mathfrak{t}$,  $\langle \Phi,\xi\rangle$ is a proper function on $Y$.
\excise{We will write this space $[Y/\!/T]$ to indicate its presentation as the quotient by $T$ of a {\em manifold}. In particular, the integral cohomology of $[Y/\!/T]$ may be different than the integral cohomology of the underlying topological space $Y/\!/T$.}

Inertial cohomology -- like Chen-Ruan cohomology -- is not in general functorial. A map - even an equivariant map - between spaces does not necessarily induce a {\em ring} map in the other direction on inertial or Chen-Ruan cohomology. The inclusion $\Phi^{-1}(\alpha)\hookrightarrow Y$ does not {\em a priori} induce a map of rings in inertial cohomology; however, we proved it does in this case.

\begin{theorem}[Goldin, Holm, Knutson]\label{th:GHK} Let $Y$ be a proper Hamiltonian $T$-space, with moment map $\Phi: Y \longrightarrow \mathfrak{t}^*$. Let $\alpha$ be a regular value of $\Phi$. Then the inclusion $\Phi^{-1}(\alpha)\hookrightarrow Y$ induces a surjection of rings
$$
\mathcal{K}: NH_T^{*,\diamond}(Y;\Q) \longrightarrow H_{CR}^*([\Phi\inv(\alpha)/T];\Q).
$$
Furthermore, the kernel of $\mathcal{K}$ is given in \cite{GHK}.
\end{theorem}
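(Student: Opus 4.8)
The plan is to realize $\mathcal{K}$ concretely as the direct sum, over $g$ in the torus, of the pullbacks
$(\iota^g)^*\colon H_T^*(Y^g;\Q)\to H_T^*(\Phi\inv(\alpha)^g;\Q)$ obtained by restricting the equivariant inclusion $\iota\colon\Phi\inv(\alpha)\hookrightarrow Y$ to the fixed-point sets $Y^g$, and then to verify four things: that its codomain is $H_{CR}^*([\Phi\inv(\alpha)/T];\Q)$; that it is surjective as a map of $H_T^*(pt;\Q)$-modules; that it respects the inertial product $(\ref{eq:inertialproduct})$; and that its kernel is the ideal recorded in \cite{GHK}. The first two are comparatively soft; the multiplicativity is the substance.

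For the codomain: $\mathcal{K}$ sends every summand $H_T^*(Y^g)$ with $g$ outside the finite subgroup $\Gamma\subset T$ generated by the stabilizers on the level set to zero, since $\Phi\inv(\alpha)^g=\Phi\inv(\alpha)\cap Y^g$ is then empty, so $\mathcal{K}$ takes values in $NH_T^{*,\Gamma}(\Phi\inv(\alpha);\Q)=\bigoplus_{g\in\Gamma}H_T^*(\Phi\inv(\alpha)^g;\Q)$. Because $\alpha$ is a regular value, $T$ acts locally freely on $\Phi\inv(\alpha)$, so the identification of inertial with Chen--Ruan cohomology for locally free actions recalled at the start of Section~\ref{CRcohomology} rewrites this module as $H_{CR}^*([\Phi\inv(\alpha)/T];\Q)$, with the same $*$-grading. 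Surjectivity is then Kirwan surjectivity applied one fixed component at a time: each $Y^g$ is a closed $T$-invariant submanifold of $Y$, hence itself a proper Hamiltonian $T$-space (the function $\langle\Phi,\xi\rangle$ of the properness hypothesis restricts to a proper function on $Y^g$), and $\alpha$ remains a regular value of $\Phi|_{Y^g}$ because the $T$-action on $\Phi\inv(\alpha)\cap Y^g$ is locally free; hence each $(\iota^g)^*$ is onto, and therefore so is $\mathcal{K}$.

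The heart of the proof is that $\mathcal{K}$ is a ring map. Writing $L:=\Phi\inv(\alpha)$ and fixing $g,h\in\Gamma$ with $H=\langle g,h\rangle$, I would show that restriction to $L$ intertwines each of the three operations assembling $\alpha_g\smile\alpha_h$. The pullbacks $e_1^*,e_2^*$ along $Y^H\hookrightarrow Y^g,Y^h$ commute with restriction by functoriality of pullback. The obstruction bundle $E\to L^H$ is the restriction of $E\to Y^H$: the key point is that $L$ meets $Y^H$ transversally inside $Y$ — equivalently, $d(\Phi|_{Y^H})_p$ is onto $\t^*$, which is precisely the statement that $\alpha$ is a regular value of $\Phi|_{Y^H}$ — so that $\nu(L^H\subset L)\cong\nu(Y^H\subset Y)|_{L^H}$ as $H$-representations, the isotypic decompositions match, and naturality of the $T$-equivariant Euler class gives $(\iota^H)^*\epsilon=\epsilon'$. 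Finally, for the pushforward one needs $(\iota^{gh})^*\circ(\overline{e}_3)_*=(\overline{e}_3')_*\circ(\iota^H)^*$, where the primed maps are the analogues for $L$. The relevant square, with horizontal arrows $L^H\hookrightarrow L^{gh}$ and $Y^H\hookrightarrow Y^{gh}$ and vertical arrows $\iota^H,\iota^{gh}$, is Cartesian because $L^H=L\cap Y^H=(L\cap Y^{gh})\cap Y^H$, and it is transverse because $L^{gh}$ meets $Y^H=(Y^{gh})^H$ transversally inside $Y^{gh}$, again by regularity of $\alpha$ for $\Phi|_{Y^H}$. The transverse base-change (excess-intersection with vanishing excess bundle) formula for the pushforward of oriented embeddings then produces the identity with no correction term, and combining the three compatibilities yields $\mathcal{K}(\alpha_g\smile\alpha_h)=\mathcal{K}(\alpha_g)\smile\mathcal{K}(\alpha_h)$.

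The hard part is this multiplicativity bookkeeping: one must arrange that the transversality statements hold simultaneously for $Y^g,Y^h,Y^H,Y^{gh}$ and all their intersections with $L$, that the $H$-equivariant normal-bundle identifications are compatible with the ones used to define $\epsilon$, and that the Gysin formula is applied with the correct orientations — here supplied by the complex structures on all the relevant normal bundles. Once $\mathcal{K}$ is known to be a surjective ring map, its kernel is computed summand by summand: the kernel of the $g$-th Kirwan map $H_T^*(Y^g;\Q)\to H^*(\Phi\inv(\alpha)^g/T;\Q)$ admits a Tolman--Weitsman-type description in terms of classes supported away from the appropriate sublevel sets of the components $\langle\Phi,\xi\rangle|_{Y^g}$, and recombining these descriptions over $g\in\Gamma$ gives the explicit ideal recorded in \cite{GHK}; I would quote that combinatorial identification rather than rederive it here.
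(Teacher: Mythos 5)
The paper does not actually prove this theorem; it is imported from \cite{GHK}, the only methodological indication being that it ``relies heavily'' on Kirwan surjectivity (\ref{eq:Kirwan}). Your proposal is a correct reconstruction along exactly those lines: componentwise Kirwan surjectivity over $\Q$ applied to each proper Hamiltonian $T$-space $Y^g$, the identification $NH_T^{*,\diamond}(\Phi\inv(\alpha);\Q)\cong H_{CR}^*([\Phi\inv(\alpha)/T];\Q)$ for the locally free action on the level set, and compatibility of the obstruction bundles and Gysin maps under restriction to $\Phi\inv(\alpha)$ --- and the transversality of $\Phi\inv(\alpha)$ with each $Y^H$ that you need does follow, as you say, from local freeness of the $T$-action on the level set forcing $\alpha$ to be a regular value of each $\Phi|_{Y^H}$. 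The one place where \cite{GHK} proceeds differently is that the multiplicativity and kernel analysis there are largely routed through the auxiliary product $\star$ (which coincides with $\smile$ on Hamiltonian spaces, as the paper remarks in Section~\ref{se:inertial}), rather than verifying the transverse base-change identity for $\smile$ directly as you do; both routes lead to the same conclusion, with yours closer to the geometric definition of the Chen--Ruan product.
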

This theorem relies heavily on the result  \cite{Ki84} due to Kirwan that the inclusion $\Phi^{-1}(\alpha)\hookrightarrow Y$ induces a surjection of rings
\begin{equation}\label{eq:Kirwan}
H_T^*(Y;\Q)\longrightarrow H^*(\Phi\inv(\alpha)/T;\Q).
\end{equation}
This property is often referred to as {\em Kirwan surjectivity.}
We are specifically interested in the toric cases, when the quotient spaces arise as the symplectic or hyperk\"ahler reduction of a linear torus action on an affine vector space $Y$. These quotients  have large residual torus action on them, making them toric varieties in the K\"ahler case, and hypertoric varieties in the hyperk\"ahler case.

When $T \cong S^1$, the reduced space is a weighted projective space $\C P^n_{(b)}$, where $(b)$ indicates the set of weights specified by the $S^1$ action on $Y$. Theorem~\ref{th:GHK} thus provides a new proof and formula for the Chen-Ruan cohomology of toric varieties - including weighted projective spaces - over $\Q$. See also \cite{BCS} (and an explanation of the equivalence in \cite{GHK}).

The difficulty in proving a theorem analogous to Theorem~\ref{th:GHK} over $\Z$ is that Kirwan surjectivity (Equation~\ref{eq:Kirwan}) does not hold over $\Z$. However, it does in certain cases: for an effective $S^1$ action on a vector space, it can be shown that the critical set of $\Phi$ and also of $\|\Phi\|^2$ are torsion-free. This implies the following corollary:

\begin{corollary}[Corollary 9.3 \cite{GHK}]
Let $S^1$ act on $\C^{n+1}$ with positive weights. Then the ring homomorphism
$$
\mathcal{K}: NH_{S^1}^{*,\diamond}(\C^{n+1};\Z) \longrightarrow H_{CR}^*([\C P^{n}_{(b)}];\Z)
$$ is a surjection.
\end{corollary}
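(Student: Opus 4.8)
The plan is to reduce the statement, one twisted sector at a time, to an integral Kirwan surjectivity statement for a linear $S^1$-action on a vector space, where it becomes elementary because the relevant critical sets are coordinate subspaces, hence contractible and torsion-free. Write $\Phi(z)=\frac{1}{2}\sum_{i=0}^{n}b_i\abs{z_i}^2$ for the moment map of the given action. Since all $b_i>0$ this is proper, so $\C^{n+1}$ is a proper Hamiltonian $S^1$-space, and for a positive regular value $\alpha$ the level set $\Phi^{-1}(\alpha)$ is $S^1$-equivariantly diffeomorphic to $S^{2n+1}$ with the weight action, so $[\Phi^{-1}(\alpha)/S^1]=[\C P^n_{(b)}]$. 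The map $\mathcal{K}$ is defined over $\Z$ by the same recipe as in Theorem~\ref{th:GHK}, and the same (coefficient-independent) argument of \cite{GHK} shows it is a ring homomorphism; the only place $\Q$ was needed there is Kirwan surjectivity \eqref{eq:Kirwan}, so the remaining content is that $\mathcal{K}$ is onto.

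The first step is to record that $\mathcal{K}$ respects the twisted-sector decompositions. On the source, $NH_{S^1}^{*,\diamond}(\C^{n+1};\Z)=\bigoplus_{g\in S^1}H_{S^1}^*((\C^{n+1})^g;\Z)$. On the target, $S^1$ acts locally freely on $\Phi^{-1}(\alpha)$ (which misses the origin), so by Section~\ref{CRcohomology} we may identify $H_{CR}^*([\C P^n_{(b)}];\Z)=NH_{S^1}^{*,\diamond}(\Phi^{-1}(\alpha);\Z)=\bigoplus_{g\in S^1}H_{S^1}^*((\Phi^{-1}(\alpha))^g;\Z)$. By construction $\mathcal{K}$ sends the $g$-summand of the source to the $g$-summand of the target by the pullback $\iota_g^*$ along the inclusion $(\Phi^{-1}(\alpha))^g\hookrightarrow(\C^{n+1})^g$ of $g$-fixed loci. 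A direct sum of maps is surjective exactly when each summand is, so it suffices to prove every $\iota_g^*$ is surjective.

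Next I would identify $\iota_g^*$ as a Kirwan map. The fixed locus $(\C^{n+1})^g$ is the coordinate subspace $V_g:=\bigoplus_{i\in S_g}\C_i$, where $S_g=\{i:g^{b_i}=1\}$, with the linear $S^1$-representation carrying the (positive) weights $\{b_i\}_{i\in S_g}$; the restriction of $\Phi$ to $V_g$ is the moment map of this representation, so $(\Phi^{-1}(\alpha))^g=(\Phi|_{V_g})^{-1}(\alpha)$ and $\iota_g^*$ is precisely the Kirwan map of $V_g$. If $S_g=\emptyset$, in particular for every $g$ of infinite order, the target is $0$ and there is nothing to prove; otherwise $\alpha$ is a positive regular value of $\Phi|_{V_g}$ as well, its only critical value being that of $\Phi$.

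The last step is integral Kirwan surjectivity for $V_g$. Since $V_g$ is $S^1$-equivariantly contractible, $H_{S^1}^*(V_g;\Z)=\Z[u]$. The level set $(\Phi|_{V_g})^{-1}(\alpha)$ is an $S^1$-invariant ellipsoid and an $S^1$-equivariant deformation retract of $V_g\setminus\{0\}$, so the long exact sequence of the pair $(V_g,V_g\setminus\{0\})$ together with the Thom isomorphism $H_{S^1}^*(V_g,V_g\setminus\{0\};\Z)\cong H_{S^1}^{*-2\abs{S_g}}(pt;\Z)$ identifies $\iota_g^*$ with the quotient map $\Z[u]\to\Z[u]/(e_g)$, where $e_g=\prod_{i\in S_g}(b_iu)$ is the $S^1$-equivariant Euler class of $V_g$; the sequence breaks into short exact pieces because multiplication by the nonzero element $e_g$ is injective on the domain $\Z[u]$. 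A quotient map is onto, so $\iota_g^*$ is surjective and the proof is complete. (This is just the integral version of \eqref{eq:Kirwan} for $V_g$, which holds because the critical sets of $\|\Phi|_{V_g}\|^2$ are coordinate subspaces, hence torsion-free.) I expect the only real friction to lie in the first two steps --- checking that $\mathcal{K}$ genuinely decomposes as $\bigoplus_g\iota_g^*$ and that the locally free identification $H_{CR}^*([\C P^n_{(b)}];\Z)\cong NH_{S^1}^{*,\diamond}(\Phi^{-1}(\alpha);\Z)$ is valid over $\Z$; once those structural facts are in place, the contractibility of each $V_g$ makes the surjectivity automatic.
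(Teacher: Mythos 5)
Your proposal is correct and is in substance the same argument the paper (following Corollary 9.3 of \cite{GHK}) relies on: surjectivity is checked sector by sector, where each $g$-sector is the integral Kirwan map of the linear $S^1$-representation $(\C^{n+1})^g$ with positive weights, and integral surjectivity there holds because everything in sight is a contractible coordinate subspace with torsion-free (equivariant) cohomology --- your Thom--Gysin identification of $\iota_g^*$ with $\Z[u]\to\Z[u]/(e_g)$, $e_g=\prod_{i\in S_g}(b_iu)$ a non-zero-divisor, is exactly the explicit form of the paper's remark that the critical sets of $\Phi$ and $\|\Phi\|^2$ are torsion-free. The only caveats are the ones you already flag: that $\mathcal{K}$ is a ring map over $\Z$ and that $H_{CR}^*([\C P^n_{(b)}];\Z)$ is identified with $NH_{S^1}^{*,\diamond}(\Phi^{-1}(\alpha);\Z)$ are structural inputs from \cite{GHK}, not things reproved here.
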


In the hyperk\"ahler case, $Y$ has three K\"ahler structures; if the $T$ action is hyperhamiltonian, then there are three moment maps, one for each K\"ahler structure. They may be combined into the two maps $\Phi_\R:Y\rightarrow \mathfrak{t}$ and $\Phi_\C:Y\rightarrow \mathfrak{t}\otimes \C$, as explained in \excise{REFERENCE KONNO IN PROCEEDINGS}\cite{Ko}. The {\em hyperk\"ahler reduction} at a regular value $(\nu_1,0+0i)$ is given by $\Phi_\R^{-1}(\nu_1)\cap\Phi_\C^{-1}(0)/T$, and is denoted $X/\!/\!/\!/T$, sometimes with a subscript to indicate the point of reduction.

For hyperk\"ahler quotients, there is also a well-defined Kirwan map induced from the inclusion $\Phi_\R^{-1}(\nu)\cap\Phi_\C^{-1}(0)\hookrightarrow Y$, though the induced map
\begin{equation}\label{eq:HKKirwan}
H_T^*(Y;\Q)\rightarrow H_T^*(\Phi_\R^{-1}(\nu)\cap\Phi_\C^{-1}(0);\Q)
\end{equation}
is not known to be surjective. It also has not been proven that a ring map exists from the inertial cohomology of a hyperk\"ahler manifold $Y$ to the Chen-Ruan cohomology of its hyperk\"ahler reduction. However, in the case that $Y=T^*\C^{n+1}$ with a linear $T$ action, surjectivity  of Equation (\ref{eq:HKKirwan}) is known to hold \cite{Ko}.
 This allowed the author and M. Harada to prove the following:
\begin{theorem}[Goldin-Harada]\label{th:GH} Let $Y=T^*\ C^{n+1}$ with a $T$ action given by acting on $\C^{n+1}$ and by its inverse on the fiber directions, as specified in \excise{REFERENCE KONNO IN PROCEEDINGS}\cite{Ko}. Let $\Phi_\R\oplus\Phi_\C$ be a hyperhamiltonian moment map for this action. Then the inclusion $\Phi_\R^{-1}(\alpha)\cap\Phi_\C^{-1}(0)\hookrightarrow Y$ induces a surjection
\begin{equation}\label{eq:hypsur}
NH_T^{*,\diamond}(Y;\Q)\longrightarrow H_{CR}^*([\Phi_\R^{-1}(\alpha)\cap\Phi_\C^{-1}(0)/T];\Q).
\end{equation}
The ring $NH_T^{*,\diamond}(Y)$ and the kernel of (\ref{eq:hypsur}) are computed in \cite{GH}.
\end{theorem}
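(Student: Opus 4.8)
The plan is to follow the template of Theorem~\ref{th:GHK}, substituting the hyperk\"ahler Kirwan surjectivity of Equation~(\ref{eq:HKKirwan}) for the symplectic one. Write $L:=\Phi_\R^{-1}(\alpha)\cap\Phi_\C^{-1}(0)$. Since $\alpha$ is a regular value, $T$ acts on $L$ with finite stabilizers, so by \cite{GHK} the inertial cohomology $NH_T^{*,\diamond}(L;\Q)$ is canonically isomorphic, as a ring, to $H_{CR}^*([L/T];\Q)$. It therefore suffices to prove that the inclusion $\iota\colon L\hookrightarrow Y$ induces a map $\mathcal{K}$ on each inertial summand, that $\mathcal{K}$ is surjective, and that $\mathcal{K}$ is a ring homomorphism with respect to the inertial product $\smile$ on both sides.

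For the module statement and surjectivity I would argue summand by summand. For $g$ in the finite subgroup $\Gamma\subset T$ generated by the isotropy on $Y$, the fixed set $Y^g$ is again a linear hyperk\"ahler $T$-space of cotangent type $T^*\C^{N_g}$ (keeping only the coordinate lines fixed by $g$, together with their fiber directions), its hyperhamiltonian moment map is the restriction of $\Phi_\R\oplus\Phi_\C$, and $L^g=L\cap Y^g$ is the associated hyperk\"ahler level set, with $\alpha$ still a regular value for the restricted map. Konno's surjectivity theorem \cite{Ko} applies to each such $T^*\C^{N_g}$, so $H_T^*(Y^g;\Q)\to H_T^*(L^g;\Q)$ is onto; the direct sum over $g\in\Gamma$ gives the surjection $NH_T^{*,\diamond}(Y;\Q)\to NH_T^{*,\diamond}(L;\Q)$ of $H_T^*(pt)$-modules. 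This also pins down the kernel: on the $g$-summand it is $\ker\bigl(H_T^*(Y^g)\to H_T^*(L^g)\bigr)$, computed explicitly by Konno and by Hausel--Sturmfels \cite{HS}; assembling these descriptions is the combinatorial bookkeeping carried out in \cite{GH}, and the ring $NH_T^{*,\diamond}(Y)$ itself is read off from the linear model exactly as in Main Example~\ref{mex:inertial} (the $S^1$ case), generalized combinatorially.

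The substantive part is that $\mathcal{K}$ respects $\smile$. The inertial product is assembled from pullbacks $e_1^*,e_2^*$ along $Y^H\hookrightarrow Y^g,Y^h$, multiplication by the virtual class $\epsilon=e(E)$ with $E\to Y^H$ the obstruction bundle, and the pushforward $(\overline{e}_3)_*$ along $Y^H\hookrightarrow Y^{gh}$, where $H=\langle g,h\rangle$. Restriction to $L$ commutes with pullbacks for free. It commutes with multiplication by $\epsilon$ because the obstruction bundle $E\to L^H$ is the restriction of $E\to Y^H$: both are built from the same isotypic pieces of the normal bundle, and transversality of the level set inside $Y$ gives $\nu(L^H\subset L)=\nu(Y^H\subset Y)|_{L^H}$, so the equivariant Euler classes match. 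Finally, in the square of inclusions with horizontal maps $\overline{e}_3$ and vertical maps $L^H\hookrightarrow Y^H$, $L^{gh}\hookrightarrow Y^{gh}$, transversality again yields $\nu(L^H\subset L^{gh})=\nu(Y^H\subset Y^{gh})|_{L^H}$, so the square carries no excess bundle and restriction commutes with $(\overline{e}_3)_*$. Composing the three commutations gives $\mathcal{K}(a\smile b)=\mathcal{K}(a)\smile\mathcal{K}(b)$, which under the identification above is the Chen--Ruan product on $[L/T]$.

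The main obstacle is two-layered. The deep input, which I would cite rather than reprove, is Konno's hyperk\"ahler Kirwan surjectivity for $T^*\C^{N}$ and for all the relevant fixed subspaces $Y^g$; this is delicate precisely because the complex moment map cuts out $\Phi_\C^{-1}(0)$, which is not itself a smooth symplectic reduction, so the Morse theory of $\|\Phi_\R\|^2$ must be run on that singular-looking space. The point one must verify by hand, using linearity of the action and the explicit form of $L$, is the base-change/transversality claim that the normal bundles of the fixed subspaces inside the level set agree with the ambient ones; without it the pushforward in the product would fail to commute with restriction and the ring-map argument would collapse. Everything else is combinatorial bookkeeping with logweights, as in the proof of Main Example~\ref{mex:inertial}.
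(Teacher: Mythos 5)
Your proposal is correct and follows essentially the same route as the source: the paper itself offers no proof here, merely attributing the result to \cite{GH} with Konno's hyperk\"ahler Kirwan surjectivity for $T^*\C^{n+1}$ as the decisive input, and your reconstruction (summand-wise surjectivity $H_T^*(Y^g)\to H_T^*(L^g)$ via Konno on each fixed subspace, plus the transversality/base-change argument showing restriction intertwines the obstruction bundles and the pushforward) is exactly the strategy carried out there. The only detail you gloss over, which \cite{GH} must and does handle, is that $T$ may act non-effectively on $Y^g$ (and one must check $\alpha$ remains regular for the restricted moment map), but this does not affect the validity of the approach.
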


This theorem led the authors to a combinatorial description of the Chen-Ruan cohomology of hypertoric varieties. Another description was independently discovered by \cite{JT} using the language of stacks and fans.

\excise{Our goal is then to do the same analysis on the topology of the moment maps for the hypertoric case. In this case, torsion can arise where.... and compare to the techniques developed by Konno to make the moment map equivariantly perfect.
[And I think the following is repetitive.]
\begin{theorem}[Goldin, Harada] Over $\Q$ there is a surjection via $S^1$ to the Chen-Ruan cohomology of $T^*\C^n/\!/\!/\!/T$.
\end{theorem}
}

\excise{
\section{Construction of hypertoric varieties}\label{se:hypertoric}
We follow Konno's quotient construction of hypertoric varieties. Let $\H^n$ be the quaternionic vector space with three complex structures, denoted $J_1,J_2,J_3.$ We may identify $\H^n \iso T^*\C^n = \C^n\times \C^n$ by
$$
(q_1,\dots, q_b) \longmapsto ((z_1,\dots, z_n),(w_1,\dots, w_n)),
$$
where $q_i = z_i + w_iJ_2,$ for $i=1,\dots, n$, with $z_i, w_i\in \R+\R J_1\iso \C$. We use this identification to write $(z,w)\in \H^n$, where $z$ and $w$ may be thought of as $n$-vectors in $\C^n$. Correspondingly, there are three K\"ahler structures, coming from compatibility with the complex structures and the metric  on $\H^n$.  Let $T^n =\{{\bf t}=(t_1,\dots, t_n)\in \C^n: |t_i|=1\}$. There is a natural action by $T^n$ on $\H^n$ on the right (Hamiltonian with respect to all three symplectic forms), given by
$$
{\bf t}\cdot (z,w) := (zt, wt^{-1}).
$$
From this action and the three K\"ahler forms, we obtain three moment maps, which we denote $\phi_i: \H^n \rightarrow (\t^n)^*, i=1,2,3$. These may be written as a real moment map and a complex moment map. Let $\{\varepsilon_i\}$ be a basis for $T^n$ given by the splitting above. Let $\{u_i\}$ be the dual basis of $(\t^n)^*$. The explicit formula for the moment maps are
\begin{align*}
\phi_\R(z,w):=\phi_1(z,w) &= \pi \sum_{i=1}^n (|z_i|^2 - (w_i|^2)u_i\\
\phi_\C(z,w):=(\phi_2+\sqrt{-1}\phi_3)(z,w) &= -2\pi\sqrt{-1}\sum_{i=1}^n z_iw_iu_i.
\end{align*}

Lastly, let $T^k\subset T^n$ be a compact subtorus, with Lie algebra $\t^k$.
This inclusion induces an exact sequence
\begin{equation}\label{eq:Delzant-Lie}
\xymatrix @R=-0.2pc {
0 \ar[r] & \t= \t^k \ar[r]^{\iota} & \t^n \ar[r]^{\beta} & \t^d \ar[r] & 0, \\
         &                                & \varepsilon_i \ar@{|->}[r] & a_i & \\
}
\end{equation}
 and its dual sequence
\begin{equation}\label{eq:Delzant-Lie-dual}
\xymatrix @R=-0.2pc {
0 \ar[r] & (\t^d)^* \ar[r]^{\beta^*} \ar[r] & (\t^n)^* \ar[r]^{\iota^*} & \t^* = (\t^k)^* \ar[r] & 0.\\
     & & u_i \ar@{|->}[r] & \lambda_i := \iota^* u_i & \\
}
\end{equation}
where $T^d = T^n/T^k$ be the quotient torus, and $\t^d$ is its Lie algebra. The maps $\psi_i:= i^*\circ \phi_i: \H^n\rightarrow (\mathfrak{t}^k)^*$ are the three moment maps for the $T^k$ action on $\H^n$. We assume that the restriction of the $T^k$ action on the first $n$ components has a symplectic (real) moment map that is proper and bounded below in one direction.\footnote{This ensures that the corresponding toric variety is compact.} Let $\Psi := (\psi_1, \psi_2, \psi_3)$.
\excise{
Then Konno proved \cite{Ko} the following lemma.
\begin{lemma}
An element $(\nu_1,\nu_2,\nu_3)\in (\mathfrak{t}^k)^*\oplus (\mathfrak{t}^k)^*\oplus (\mathfrak{t}^k)^*$ is a regular value of $\Psi$ if an only if for any $J\subset\{1,\dots, n\}$ with $|J|<k$, not all of $\nu_1,\nu_2,\nu_3$ are contained in the span of $\{i^*u_j| j\in J\}$.
\end{lemma}
}
We may now define a {\bf hypertoric variety} as the quotient of a level set of a hyperk\"ahler moment map by the torus $T^k$.
\begin{definition}
Let $T^k$ act on $T^*\C^n$ as specified above, with hyperk\"ahler moment map $\Psi$. Suppose that $\nu = (\nu_1,0,0)$ is a regular values of $\Psi$. Then
$$
\mathfrak{M}=T^*\C^n/\!/\!/\!/_{\nu}T := \Psi^{-1}(\nu)/T
$$ is a {\bf hypertoric variety.}
\end{definition}
$\mathfrak{M}$ has an induced hyperk\"ahler structure on it, as well a residual action by $T^d\iso T^n/T^k$. In the case that the action of $T^k$ on $\Psi^{-1}(\nu)$ is {\em free}, this quotient is a manifold and is termed a ``toric hyperk\"ahler manifold" by Konno \cite{Ko}. However, since the space is not actually toric, we avoid this name. For a generic regular value, the action is only {\em locally free}, implying that the quotient is an orbifold.

\excise{
A description of the Chen-Ruan cohomology of these spaces over $\Q$ can be found in \cite{GH}. The reason that the results and techniques in that paper cannot be immediately extended over $\Z$ is that Kirwan surjectivity is not known to hold over $\Z$.}

\begin{mainexample}
Let $S^1$ act on $T^*\C^{n+1}$ with weights positive weights $b_0,\dots, b_n$ as in Example~\ref{mex:inertial}. Then for any regular value $\nu = (\nu_1,0,0)\in (\mathfrak{t}^*)^3$, we term the corresponding hypertoric variety $\mathfrak{M} =\Psi^{-1}(\nu)/S^1$ a {\bf weighted hyperprojective space.}
\end{mainexample}

\excise{We denote this hypertoric variety $T^*\C P^{n}_{(b)}$, where $b=(b_0,\dots, b_n)$, and call it a {\bf weighted hyperprojective space}. MAYBE IT SHOULDN'T BE DENOTED THIS WAY BECAUSE IT'S A BIT OF A CONSEQUENCE, NOT BY DEFINITION, THAT THIS IS A COTANGENT BUNDLE.}
}
\excise{
\section{The core of a hypertoric variety}\label{se:core}

There is a $\C^\times$ action on $T^*\C^{n+1}$ that rotates the fibers, i.e. for $\lambda\in \C^\times$,
$$
\lambda\cdot (z_0,\dots, z_n, w_0,\dots, w_n)= (z_0,\dots, z_n, \lambda w_0,\dots, \lambda w_n).
$$
This action descends to any hypertoric quotient $\mathfrak{M}$, since the $\C^\times$ action commutes with any $T$ action on $T^*\C^{n+1}$. Furthermore, the action of $S^1\subset \C^\times$ on $\mathfrak{M}$ is Hamiltonian with respect to the first of its symplectic forms. The map $\mu ([z,w]) = \frac{1}{2}|w|^2$ is a corresponding moment map for this $S^1$ action. Then $\mu$ is a proper map and $\mu^{-1}(0)=\C^{n+1}/\!/_{\nu_1}T$ \cite{HP}. Furthermore, $\mu$ is a an orbifold Morse-Bott function in the sense of \cite{LT}. The fact that $\mu$ is proper ensures that $\lim_{\lambda\rightarrow 0} \lambda\cdot p$ exists for all $p\in \mathfrak{M}$, allowing us to define the {\em core} of a hypertoric variety.
\begin{definition}
Let $\mathfrak{M}$ be a hypertoric variety (with at worst orbifold singularities). Let
$$
C(\mathfrak{M}) := \{p\in \mathfrak{M}: \lim_{\lambda\rightarrow \infty} \lambda\cdot p\mbox{ exists.}\}
$$
The set $C(\mathfrak{M})$ is called the {\bf core} of the hypertoric variety $\mathfrak{M}$.
\end{definition}

The core is a finite union of toric varieties \cite{Ko}. Let $F$ be a connected component of the fixed point set $\mathfrak{M}^{\C^\times}$, and $U_F$ the closure of the set of points that flow into $F$ as $\lambda\to\infty$. Then
$$C(\mathfrak{M}) = \cup_{F\subseteq \mathfrak{M}^{\C^\times}} U_F.$$

The toric variety $\C^{n+1}/\!/_{\nu_1}T$ is always in the core, since the $\C^\times$ orbits of points in $\mu^{-1}(0)=\C^{n+1}/\!/_{\nu_1}T$ must stay in this set as $\mu$ is $S^1\subset \C^\times$ invariant, and all such points have existing limits since $\C^{n+1}/\!/_{\nu_1}T$ is compact.  The core is the union of the toric varieties corresponding (in the sense of \cite{LT}) to the convex polytopes appearing in a hyperplane arrangement associated to the hypertoric variety $\mathfrak{M}$.
Note that $\mathfrak{M}$ has a residual action of a $T^d\iso T^{n+1}/T^k$ torus, where $T^k$ is the torus by which we took the quotient. There is a very strong sense in which $\mathfrak{M}$  is homotopic to its core.
\begin{lemma}[Bielawski-Dancer] The core $C(\mathfrak{M})$ is (an equivariant) deformation retract of $\mathfrak{M}$.
\end{lemma}

In the case of reduction of $T^*\C^{n+1}$ by $S^1$, the image of the (real) moment map is described by exactly $n+1$ (oriented) hyperplanes in $n$-dimensional space. Their intersections  form maximally one connected convex polytope, and it is a simplex. Thus when $\mathfrak{M}$ is a weighted hyperprojective space, the weighted projective space $\C^{n+1}/\!/_{\nu_1}S^1$ {\em is} the core\excise{. Indeed, $\mathfrak{M}$ retracts to this weighted projective space }\cite{HP}.
\excise{\em THERE MUST BE A NICER WAY OF SEEING THIS. USE NICK'S THESIS, PAGE 27 IN PARTICULAR.
SHOW THAT IN THIS CASE, $\mathfrak{M}=T^*\C P^n_{(b)}$ AND $\mathfrak{L} = \C P^n_{(b)}$.

ALSO IT MIGHT BE A GOOD IDEA TO WRITE THIS IN TERMS OF KONNO'S WORK INSTEAD OF NICK'S. KONNO MIGHT EVEN STATE EXPLICITLY THIS FACT ABOUT THE $S^1$ CASE, SO WE WON'T HAVE TO PROVE IT.}

From this lemma, it may seem immediate to conjecture that the Chen-Ruan cohomology of a weighted hyperprojective space equals the Chen-Ruan cohomology of the corresponding weighted projective space. However, while the homotopy is equivariant, it does not preserve all the normal bundles that come into play in the Chen-Ruan (or inertial) cohomology. It is not a {\em $S^1$-representation homotopy} in the sense of Definition~\ref{de:representationhomotopy}. As we shall see by a direct comparison with the description of the CR-cohomology of these weighted projective spaces presented in \cite{Ho}, the rings have a different product structure, even over $\Q$-coefficients.
}

\section{The Chen-Ruan cohomology of weighted hyperprojective spaces}\label{se:computationCRweightedhyperprojective}

Let $S^1$ act on $T^*\C^{n+1} \cong \C^{n+1}\oplus \C^{n+1}$ with weights $b_0,\dots, b_n\in \Z$ on the first copy of $\C^{n+1}$ and with weights $-b_0,\dots, -b_n$ on the second copy. Consider the homomorphism $\phi: S^1\rightarrow T^{n+1}\cong S^1\times\cdots\times S^1$ given by $t\to (t^{b_0},\dots, t^{b_n})$. This induces an inclusion of Lie algebras $\iota: \mathfrak{s}^1\hookrightarrow \mathfrak{t^{n+1}}$. Let $\{\varepsilon_i\}$ be a basis for $\mathfrak{t}^{n+1}$, and $\{u_i\}$ a basis for its dual $(\mathfrak{t}^{n+1})^*.$ Let $T^n\cong T^{n+1}/S^1$. We define the vectors $\{a_i\}$ by the image of $\{\varepsilon_i\}$ in the exact sequence
\begin{equation}\label{eq:Delzant-Lie}
\xymatrix @R=-0.2pc {
0 \ar[r] & \t= \mathfrak{s}^1 \ar[r]^{\iota} & \t^{n+1} \ar[r]^{\beta} & \t^n \ar[r] & 0, \\
         &                                & \varepsilon_i \ar@{|->}[r] & a_i & \\
}
\end{equation}
\excise{
and and the vectors $\lambda_i$ by
\begin{equation}\label{eq:Delzant-Lie-dual}
\xymatrix @R=-0.2pc {
0 \ar[r] & (\t^d)^* \ar[r]^{\beta^*} \ar[r] & (\t^n)^* \ar[r]^{\iota^*} & \t^* = (\t^k)^* \ar[r] & 0.\\
     & & u_i \ar@{|->}[r] & \lambda_i := \iota^* u_i, & \\
}
\end{equation}
}

\begin{theorem}\label{th:CRhyperprojective}
The (rational) Chen-Ruan cohomology of the weighted hyperprojective space $\mathfrak{M}=T^*\C^{n+1}/\!/\!/\!/_\nu S^1$ for any regular value $\nu$ is
$$
H_{CR}^*([\mathfrak{M}]) \cong \Q[u_0,\dots u_n,\alpha_0,\dots, \alpha_{\ell-1}]/\mathcal{I}+\mathcal{J}+\mathcal{K},
$$
where $\mathcal{I}, \mathcal{J}$ and $\mathcal{K}$ are given by
\begin{align*}
\mathcal{I}&=
\langle\alpha_g\smile\alpha_h-
\alpha_{[g+h]}\prod_{i=0}^n (b_iu)^{\frac{[b_ig]+[b_ih]-[b_i(g+h)]}{\ell}}
\cdot (-b_iu)^{\frac{[-b_ig]+[-b_ih]-[-b_i(g+h)]}{\ell}}\rangle\\
\mathcal{J} &= \langle image(\beta^*)\rangle\quad\mbox{and}\\
\mathcal{K} &= \sum_{g\in \Gamma}\langle \alpha_g\prod_{c_i\neq 0} b_iu_i |\ \mbox{there exist $c_j\in \R$ such that }\sum_{j=0}^n c_j\varepsilon_j\in image(\iota)-0\rangle.
\end{align*}
\excise{and
\begin{align*}
\mathcal{I} =
\langle
\bigcup_{g\in \Gamma}\prod_{i\in A_g} -b_i^2u^2
\rangle
\end{align*}
where $A_g = \{i\in\{0,\dots, n\}|\ g\in \langle b_i\rangle\leq \Z/\ell\Z\}$.}
where $\iota$ is given in by the inclusion in (\ref{eq:Delzant-Lie}) and $\beta^*$ is the dual map $(\mathfrak{t}^n)^*\rightarrow (\mathfrak{t}^{n+1})^*$ to $\beta$. See  \excise{REFERENCE KONNO IN PROCEEDINGS}\cite{Ko} for more on the construction of hypertoric varieties.
\footnote{The numbers $b_i$ are not relevant to the ideal $\mathcal{K}$; here they serve only as a reminder of a conjectural answer to the question posed in Remark~\ref{re:openquestion}.}
\end{theorem}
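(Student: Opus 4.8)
The plan is to realize $H^*_{CR}([\mathfrak{M}])$ as a quotient of the inertial cohomology $NH_{S^1}^{*,\diamond}(T^*\C^{n+1})$ and then to identify the three pieces of the kernel. By Theorem~\ref{th:GH}, the inclusion $\Phi_\R^{-1}(\nu)\cap\Phi_\C^{-1}(0)\hookrightarrow Y=T^*\C^{n+1}$ induces a surjection $NH_{S^1}^{*,\diamond}(Y;\Q)\twoheadrightarrow H_{CR}^*([\mathfrak{M}];\Q)$, so it suffices to compute the source ring and the kernel. The first step is to observe that the $S^1$-action on $Y$ has only finite stabilizers coming from the subgroup $\Gamma\cong\Z/\ell\Z$ generated by the isotropy, so $NH_{S^1}^{*,\diamond}(Y) = NH_{S^1}^{*,\Gamma}(Y)$, and then to invoke Main Example~\ref{mex:inertial}: the $\Gamma$-subring is $\Z[u,\alpha_0,\dots,\alpha_{\ell-1}]/\mathcal{I}$ with $\mathcal{I}$ exactly the relation appearing in the theorem (tensored with $\Q$). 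This accounts for the generators $\alpha_g$, the single equivariant parameter $u$, and the ideal $\mathcal{I}$.

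The second step is to promote the single class $u\in H^*_{S^1}(pt)$ to the $n+1$ classes $u_0,\dots,u_n$ coming from the residual $T^n\cong T^{n+1}/S^1$ action, while introducing the relation $\mathcal{J}=\langle\mathrm{image}(\beta^*)\rangle$. Here I would use the standard Delzant-type bookkeeping from \cite{Ko}: the equivariant cohomology $H^*_{T^{n+1}}(Y)$ is $\Q[u_0,\dots,u_n]$, the restriction to the $S^1$ we quotient by sends $u_i\mapsto b_iu$ (via $\iota$), and the residual torus is cut out by killing $\mathrm{image}(\beta^*)$, the classes pulled back from the base $\t^n$. Working $T^{n+1}$-equivariantly on all fixed-point sets at once, the inertial cohomology $NH_{T^{n+1}}^{*,\Gamma}(Y)$ is $\Q[u_0,\dots,u_n,\alpha_0,\dots,\alpha_{\ell-1}]/\mathcal{I}$, and passing to the $S^1$-quotient with residual $T^n$-action introduces precisely $\mathcal{J}$. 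One must check that the obstruction-bundle Euler classes computed $T^{n+1}$-equivariantly reduce, under $u_i\mapsto b_iu$, to the ones in Main Example~\ref{mex:inertial}; this is a direct rewriting and is the routine verification that $\mathcal{I}$ as written (with the $u_i$) is the right lift.

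The third and genuinely substantive step is to identify $\mathcal{K}$ as (the extra part of) the Kirwan kernel, i.e.\ the kernel of the map $NH_{T^{n+1}}^{*,\Gamma}(Y;\Q)\to H_{CR}^*([\mathfrak{M}];\Q)$ beyond $\mathcal{J}$. On each twisted sector indexed by $g\in\Gamma$, the relevant fixed-point set $Y^g$ is a coordinate subspace, and restricting to $\Phi_\R^{-1}(\nu)\cap\Phi_\C^{-1}(0)$ and quotienting gives a hypertoric subvariety (the $g$-sector of $\mathfrak{M}$); the kernel of Kirwan surjectivity for that subvariety is generated by products $\prod_{i\in\sigma} b_iu_i$ over ``non-faces'' $\sigma$ of the relevant hyperplane arrangement — exactly the condition that some nonzero $\sum c_j\varepsilon_j$ with support $\{j:c_j\neq0\}\subseteq\sigma$ lies in $\mathrm{image}(\iota)$. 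Here I would lean on Konno's computation \cite{Ko} of $H^*$ of hypertoric varieties (the Stanley–Reisner–type relations) and on the description of the kernel of $\mathcal{K}$ promised in \cite{GH}, checking that when assembled over all sectors the kernel is exactly $\mathcal{I}+\mathcal{J}+\mathcal{K}$. The main obstacle is precisely this bookkeeping: one must verify that no relations are missed and none are overcounted — in particular that $\mathcal{K}$, which is ``$\alpha_g$ times a non-face monomial'' summed over $g$, already contains the untwisted ($g=0$) Stanley–Reisner relations and correctly truncates every twisted sector — and that the resulting presentation is consistent with, but strictly different from, the Chen-Ruan cohomology of the weighted projective space $\C P^n_{(b)}$ from \cite{Ho}, which is the comparison the rest of the paper needs.
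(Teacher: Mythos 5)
Your proposal follows the paper's proof essentially verbatim: the $\Gamma$-subring computed in Main Example~\ref{mex:inertial} supplies $\mathcal{I}$, the relabeling $u_i\mapsto b_iu$ via the Delzant sequence (\ref{eq:Delzant-Lie}) supplies $\mathcal{J}$, and the sector-by-sector description of the Kirwan kernel from \cite{GH}, computed via \cite{HS} and rephrased as in \cite{Ko}, supplies $\mathcal{K}$. One small correction: the $S^1$-action on $T^*\C^{n+1}$ is \emph{not} locally free (the origin is fixed by all of $S^1$, so every sector $NH_{S^1}^{*,g}$ is nonzero), and the reduction to the $\Gamma$-subring is justified not by finiteness of stabilizers on $Y$ but by the fact that for $g\notin\Gamma$ the fixed set $Y^g$ misses the level set, so those sectors die under the Kirwan map.
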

\begin{proof}
Recall that $\Gamma\iso \Z/\ell \Z$, where $\ell=lcm(b_0,\dots, b_n)$. The $\Gamma$-subring of the inertial cohomology $NH_{S^1}^{*,\diamond}(T^*(C^{n+1}))$ is given by $\Q[u,\alpha_0,\dots, \alpha_{\ell-1}]/\mathcal{I}$, as shown in Example~\ref{mex:inertial}. Note that this ring is isomorphic to $\Q[u_0,\dots, u_n,\alpha_0,\dots, \alpha_{\ell-1}]/\mathcal{I}+\mathcal{J}$ where $\mathcal{J}$  consists of the linear relations among the $u_i$ given by killing  $\langle image(\beta^*)\rangle$. We need only compute the kernel of the surjective map described in Theorem~\ref{th:GH}. By \cite{GH}, the kernel is generated by the kernel of $H_T^*((T^*\C^{n+1})^g)\rightarrow H^*((T^*\C^{n+1})^g/\!/\!/\!/T)$ for each $g\in \Gamma$. This computation is done in \cite{HS}, where they found that the kernel for each $g\in \Gamma$ is given by the product of those $u_i$ such that the intersection of the corresponding hyperplanes perpendicular to $a_i$ is empty. Because the torus may act noneffectively on $(T^*\C^{n+1})^g$, this is equivalent to the ideal given in (\cite{GH}, Equation 5.6).
Here we express this condition as Konno does in \cite{Ko}, by the product of those $u_i$ such that the nonzero sum of the corresponding vectors is in $image(\iota)$.
\end{proof}

\excise{the equivariant Euler class of the normal bundle to the fixed point set $\left((T^*\C^{n+1})^g\right)^T$ for each $g\in \Gamma$. This set is the point $\{0\}$ for all $g$. Thus, for any $g\in \Gamma$, there is one contribution to the kernel is given by the $S^1$-equivariant Euler class of $(T^*\C^{n+1})^g$. Note that
$$
(T^*\C^{n+1})^g = \bigoplus_{i\in A_g} \C_i\oplus \C_i,
$$
where $A_g$ is the set of $i$ such that $g\in\langle b_i\rangle$, where this is the subgroup in $\Z/\ell\Z$ generated by $b_i$.
The $S^1$ action on each plane $\C_i\oplus \C_i$ is given by $t\cdot(z_i,w_i)=(t^{b_i}z_i, t^{-b_i}w_i)$.
The equivariant Euler class of this trivial vector bundle is $\prod_{i\in A_g} -b_i^2u^2$.}

\begin{remark}
The Chen-Ruan cohomology of a weighted hyperprojective space is different from that of its core, a weighted projective space --  even over $\Q$. For example, in the case that $S^1$ acts on $T^*\C^3$ with $(b)=(2,1,1)$, the core $C(\mathfrak{M})$ is a weighted $\C P^2$ with exactly one singular point whose isotropy is $\Z/2\Z$. The Chen-Ruan cohomology of $[\C P^2_{(2,1,1)}]$ according to \cite{GHK} is
$\Q[u,\alpha]/\langle u^3, u\alpha, \alpha^2-u^2\rangle$ with degree $\deg u =2$, and $\deg \alpha=2$.
In contrast, the Chen-Ruan cohomology of $[\mathfrak{M}]\excise{=[T^*\C P^2_{(b)}]}$ is $\Q[u,\gamma]/\langle u^3, \gamma^2, u\gamma\rangle$ with $\deg u=2$ and $\deg \gamma=4$. This computation is done in \cite{GH}, \S~6.
\end{remark}

\begin{remark}\label{re:openquestion}
A natural (open) question is whether the Chen-Ruan cohomology of weighted hyperprojective spaces can be computed over $\Z$. Indeed, Theorem~\ref{th:CRhyperprojective} suggests what the answer should be. The inertial cohomology of $T^*\C^n$ computed in Example~\ref{mex:inertial} is over $\Z$, and the ideals in Theorem~\ref{th:CRhyperprojective} are expressed with integers arising from the weights. Indeed, a proof of such a formula would follow from the results in \cite{GH} if Kirwan surjectivity holds over $\Z$ for $S^1$ quotients of vector spaces. It is possible that Konno's techniques \cite{Ko} could be sufficient to prove this result, if one generalizes the line bundles he constructs over hypertoric manifolds to orbi-bundles over hypertoric orbifolds. It may also be possible to analyze directly the Morse theory in this case, to prove that torsion does not obstruct surjectivity.
\end{remark}
\section{Homotopy Invariance of Inertial Cohomology}\label{se:homotopy}

Inertial cohomology, like Chen-Ruan cohomology, is not a cohomology theory. An equivariant map of $T$-spaces, $f:X\rightarrow Y$ does not generally induce a map (of rings) $NH_T^{*,\diamond}(Y)\rightarrow NH_T^{*,\diamond}(X)$. Of course, a map in equivariant cohomology $H_T^*(Y^g)\rightarrow H_T^*(X^g)$  exists for all $g\in T$, so there is a map of graded vector spaces in inertial cohomology. It is the product structure that fails to respect functoriality.

A simple example of this failure is as follows. Let $S^1$ act on $X=\C$ with weight 2 and let $f:X\rightarrow pt$ map every point in $X$ to $Y=pt$. As vector spaces, $$NH_T^{*,\diamond}(X)=\bigoplus_{g=\pm 1}H_T^*(\C)\oplus\bigoplus_{g\in T, g\neq \pm 1} H_T^*(\{0\})\quad \mbox{and}\quad NH_T^{*,\diamond}(Y)=\bigoplus_{g\in T} H_T^*(pt),$$ with the natural map induced in equivariant cohomology $f^*:H_T^*(Y^g)\rightarrow H_T^*(X^g)$ given by the identity map for each $g\in T$. The product on $NH_T^{*,\diamond}(Y)$ is just the usual product on $H_T^*(pt)$ shifted by the grading of the group element. Thus in $NH_T^{*,\diamond}(Y)$, $1_g\smile 1_h = 1_{gh}$ for all $g,h\in S^1$, where the subscript indicates the $T$-grading.
In $NH_T^{*,\diamond}(X)$, however, an obstruction bundle may play a role.
 Choose $g=h=(gh)^{-1}=e^{\frac{2\pi i}{3}}$. These elements fix $\{0\}$ alone. Since the action on the normal bundle $\C$ is with weight 2, we have $a(g)=a(h)=a((gh)^{-1})=\frac{2}{3}$, implying that $E=\C$ is the obstruction bundle. The equivariant Euler class of this bundle is $2u$, where $u$ is the (positive) generator of $H_T^*(pt;\Z)$. It follows that $1_g\smile 1_h = (2u)_{gh}.$
Thus $f^*$ does not induce a ring map on the inertial cohomology. It is  clear that requiring the homotopy to be $T$-equivariant does not fix the problem.

We can avoid this ``change in obstruction bundle" by insisting that maps preserve (in an appropriate sense) the normal bundles to the fixed point sets. Such maps induce ring maps in inertial cohomology. When the maps are also homotopy equivalences, we obtain an isomorphism in inertial cohomology.\excise{ We make this precise below.}

\excise{We begin by noting that a smooth homotopy $F:[0,1]\times X\rightarrow Y$
between two $T$-equivariant maps $f, f':X\rightarrow Y$ is a {\em $T$-equivariant homotopy} \excise{if $F(0,x)=f(x)$, $F(1,x)=f'(x)$ for all $x\in X$, and additionally} if for all $g\in T$ and $s\in [0,1]$, $F(s, g\cdot x)=g\cdot F(s,x)$. We call $f$ and $f'$ T-equivariantly homotopic maps.} We say that the $T$-spaces $X$ and $Y$ are {\em $T$-equivariantly homotopic} if there exist equivariant maps $f:X\rightarrow Y$ and $e:Y\rightarrow X$ such that $f\circ e$ is $T$-equivariantly homotopic to $id_Y$ and $e\circ f$ is $T$-equivariantly homotopic to $id_X$. In this case, the map $f:X\rightarrow Y$ is said to be an {\em equivariant homotopy equivalence.} Note that if $X$ and $Y$ are $T$-equivariantly homotopic, then so are $X^g$ and $Y^g$ for all $g\in T$.

Lastly, we recall some notions of equivalence of vector bundles. Let $f:X\rightarrow Y$ be a smooth map, and $E$ a vector bundle over $Y$. Let $f^*(E)\rightarrow X$ denote the pullback of $E$ to $X$. We say that a $T$-bundle $E\rightarrow Y$ is isomorphic to a $T$-bundle $E'\rightarrow X$ if there exists a $T$-equivariant homotopy equivalence $f:X\rightarrow Y$ such that $f^*(E)\iso E'$ as $T$-bundles over $X$.

\begin{definition}\label{de:representationhomotopy}
Let $T$ act on manifolds $X$ and on $Y$, and let $F:[0,1]\times X\rightarrow Y$ be a smooth $T$-equivariant homotopy between smooth equivariant maps $f:X\rightarrow Y$ and $f':X\rightarrow Y$, such that $F(s,X)$ is a submanifold of $Y$ for all $s\in [0,1]$. We say that $F$ is a {\bf $T$-representation homotopy} if, for all $g,h\in T$ and all $k\in \langle g,h\rangle$, the $T$-bundles $\nu(F(s,X)^{g,h}\subset F(s,X)^k)$ are isomorphic for all $s\in [0,1]$. In this case we write $f\sim_{T} f'$.\footnote{
Note that, for all $s\in [0,1]$, $F(s,X)^g$ is a smooth submanifold of $F(s,X)$.
} We say that $X$ and $Y$ are {\bf T-representation homotopic spaces} if there exist smooth equivariant maps $f:X\rightarrow Y$ and $e:Y\rightarrow X$ such that $f\circ e\sim_{T} id_Y$ and $e\circ f\sim_{T} id_X$.
\end{definition}

\begin{remark} Note that, for $S^1$ acting on $X=\C$ with  weight 2 and on $Y=pt$ trivially, the contraction map is an $S^1$-equivariant homotopy from $X$ to $Y$, but there is no $T$-representation homotopy from $X$ to $Y$. On the other hand, if $X=\C$ with a weight 1 action, and $Y = \C^3$ with a weight 1 action on one copy of $\C$ and a trivial action on a copy of $\C^2$, then $X$ and $Y$ are $S^1$-representation homotopic, though they are not (equivariantly) homeomorphic. Similarly, if $X=\R^2-\{0\}$ with a rotating $S^1$ action, and $Y=S^1=\{x\in X: |x|^2=1\}$ with $S^1$ acting by the restriction of how it acts on $X$, then there is a $T$-representation homotopy equivalence from $X$ to $Y$, though the spaces are not homeomorphic.
\end{remark}

\excise{With this definition in place, we prove the following theorem:}
\begin{theorem}\label{th:homotopy}
Let $X$ and $Y$ be $T$-representation homotopic spaces. Then there is a ring isomorphism
$$
NH_T^{*,\diamond}(X;\Z)\cong NH_T^{*,\diamond}(Y;\Z).
$$
Indeed, if the homotopy equivalence is given by the maps $f:X\rightarrow Y$ and $f':Y\rightarrow X$, then  $f^*:NH_T^{*,\diamond}(Y;\Z)\rightarrow NH_T^{*,\diamond}(X;\Z)$ induced by $f$ is an isomorphism.
\end{theorem}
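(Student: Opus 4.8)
The plan is to first record that $f^{*}$ is an isomorphism of $\diamond$-graded modules, and then upgrade it to a ring isomorphism by unwinding the product formula (\ref{eq:inertialproduct}) and checking that $f^{*}$ intertwines each ingredient: the pullbacks $e_{1}^{*},e_{2}^{*}$, the virtual fundamental class $\epsilon$, and the pushforward $(\overline{e}_{3})_{*}$.

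\emph{Step 1 (module isomorphism).} Since $f$ is $T$-equivariant it carries $X^{g}$ into $Y^{g}$ for every $g\in T$ (if $g\cdot x=x$ then $g\cdot f(x)=f(g\cdot x)=f(x)$), and $f_{g}:=f|_{X^{g}}\colon X^{g}\to Y^{g}$ is again equivariant; the corresponding restriction of the homotopy inverse $e$, together with the restricted homotopies, shows $f_{g}$ is an equivariant homotopy equivalence. Equivariant cohomology is an equivariant homotopy invariant (the Borel construction is), so $f_{g}^{*}\colon H_{T}^{*}(Y^{g})\to H_{T}^{*}(X^{g})$ is an isomorphism for each $g$, and summing over $g\in T$ shows $f^{*}\colon NH_{T}^{*,\diamond}(Y;\Z)\to NH_{T}^{*,\diamond}(X;\Z)$ is an isomorphism of $\diamond$-graded $H_{T}^{*}(\mathrm{pt})$-modules. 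Since $f^{*}$ is the identity on $T$-labels it automatically respects the $\diamond$-grading of the product.

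\emph{Step 2 (pullbacks and obstruction class).} Fix $g,h\in T$, put $H=\langle g,h\rangle$, and write $f_{H}=f|_{X^{H}}$, $f_{gh}=f|_{X^{gh}}$. Because $f$ is equivariant, $f(X^{H})\subseteq Y^{H}$, and the squares built from the inclusions commute strictly, e.g.\ $e_{1}^{Y}\circ f_{H}=f_{g}\circ e_{1}^{X}$; functoriality of equivariant cohomology then gives $e_{i}^{X*}\circ f^{*}=f_{H}^{*}\circ e_{i}^{Y*}$ for $i=1,2$. For the virtual fundamental class, unwinding Definition~\ref{de:representationhomotopy} for the chosen pair $(f,e)$ — restricting the homotopies witnessing $f\circ e\sim_{T}\mathrm{id}_{Y}$ and $e\circ f\sim_{T}\mathrm{id}_{X}$ to fixed sets and using the cases $k=e$ and $k=gh$ — yields $T$-equivariant bundle isomorphisms $f_{H}^{*}\,\nu(Y^{H}\subset Y)\cong\nu(X^{H}\subset X)$ and $f_{H}^{*}\,\nu(Y^{H}\subset Y^{gh})\cong\nu(X^{H}\subset X^{gh})$. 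An isomorphism of $T$-bundles preserves isotypic decompositions, and the selection rule $a_{\lambda}(g)+a_{\lambda}(h)+a_{\lambda}((gh)^{-1})=2$ depends only on the character $\lambda$; hence $f_{H}^{*}$ carries the obstruction bundle $E^{Y}\to Y^{H}$ to $E^{X}\to X^{H}$, and therefore $f_{H}^{*}\epsilon^{Y}=\epsilon^{X}$ (using that the homotopy equivalence $f_{H}$ is a bijection on connected components). Combining these, $e_{1}^{X*}(f^{*}a)\cdot e_{2}^{X*}(f^{*}b)\cdot\epsilon^{X}=f_{H}^{*}\!\left(e_{1}^{Y*}(a)\cdot e_{2}^{Y*}(b)\cdot\epsilon^{Y}\right)$ for homogeneous $a,b$.

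\emph{Step 3 (pushforward, the crux) and conclusion.} It remains to check $f_{gh}^{*}\circ(\overline{e}_{3}^{Y})_{*}=(\overline{e}_{3}^{X})_{*}\circ f_{H}^{*}$, a base-change identity for the Gysin maps along the closed embeddings $X^{H}\hookrightarrow X^{gh}$ and $Y^{H}\hookrightarrow Y^{gh}$ and the equivariant homotopy equivalence $f_{gh}\colon X^{gh}\to Y^{gh}$. Gysin maps are not natural in general, so this is where the representation-homotopy hypothesis does its real work: describe $(\overline{e}_{3}^{Y})_{*}$ via a $T$-invariant tubular neighborhood, the Thom isomorphism of $\nu(Y^{H}\subset Y^{gh})$, and extension by zero; the bundle isomorphism from Step 2 makes the equivariant Thom classes correspond under $f_{gh}^{*}$, and the Thom isomorphism is natural, so the two composites agree. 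Feeding Steps 2 and 3 into (\ref{eq:inertialproduct}) gives $f^{*}(a\smile b)=f^{*}a\smile f^{*}b$; with Step 1 this proves $f^{*}$ is a ring isomorphism, and running the same argument for $e$ identifies $(f^{*})^{-1}=e^{*}$ as a ring map as well. I expect Step 3 to be the main obstacle — both in making the ``homotopy base change'' for Gysin maps precise when $f_{gh}$ is only a homotopy equivalence (so one must homotope it, equivariantly, to a map carrying a tubular neighborhood of $X^{H}$ into one of $Y^{H}$ compatibly with the bundle structure), and in cleanly extracting the normal-bundle isomorphisms for $f$ itself from the homotopies attached to $f\circ e$ and $e\circ f$.
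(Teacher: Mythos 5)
Your proposal is correct and follows essentially the same route as the paper: establish the module isomorphism from equivariant homotopy invariance, use the representation-homotopy hypothesis (via the differentials of $f'\circ f$ and $f\circ f'$) to identify the normal bundles and hence the obstruction bundles and their Euler classes, and then push the identification through the product formula (\ref{eq:inertialproduct}). The only difference is that your Step 3 carefully justifies the commutation $f_{gh}^{*}\circ(\overline{e}_{3}^{Y})_{*}=(\overline{e}_{3}^{X})_{*}\circ f_{H}^{*}$ via Thom classes, a step the paper's proof asserts without comment in its displayed computation.
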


\begin{proof}
It is immediate that $f:X\rightarrow Y$ induces an isomorphism of groups $f^g: H_T^*(Y^g;\Z)\rightarrow H_T^*(X^g;\Z)$ for all $g\in T$, since $f$ is a $T$-equivariant homotopy equivalence and equivariant cohomology is a homotopy invariant. This immediately implies that $f$ induces an isomorphism of groups
$f^*:=\bigoplus_{g\in T} f^g: NH_T^{*,\diamond}(Y;\Z)\rightarrow NH_T^{*,\diamond}(X;\Z).$
We now check that $f^*$ is  an isomorphism of rings. Fix $g,h$ and $k\in\langle g,h\rangle$, and let $\nu_X := \nu(X^{g,h}\subset X^k)$, and $\nu_Y:= \nu(Y^{g,h}\subset Y^k)$. We identify $\nu_X$ with the complement of the tangent bundle $T(X^{g,h})$ in $T^*(X^k)$ via a $T$-invariant metric, and similarly for $\nu_Y$. Since $f'\circ f \sim_T id_X$ is a T-representation homotopy, the map $d(f'\circ f)|_{\nu_X}: \nu_X\rightarrow \nu_X$ is a $T$-equivariant isomorphism. Thus $df|_{\nu_X}:\nu_X \rightarrow TY$ is injective. By equivariance of $f$, the image of $df|_{\nu_X}$ lies in $\nu_Y$ restricted to $f(X)$. It now follows that $\nu_X \subset f^*\nu_Y$, i.e.  $\nu(X^{g,h}\subset X^k)$ is isomorphic to a subbundle of $\nu(Y^{g,h}\subset Y^k)$ restricted to $f(X)$. Similarly, $f\circ f'\sim_T id_Y$ implies that $\nu(Y^{g,h}\subset Y^k)$ occurs as a subbundle of $\nu(X^{g,h}\subset X^k)$ restricted to $f'(X)$ for all $g,h\in T$ and all $k\in \langle g,h\rangle$. Since $f|_{X^{g,h}}:X^{g,h}\rightarrow Y^{g,h}$ and $f'_{Y^{g,h}}:Y^{g,h}\rightarrow X^{g,h}$ are homotopy equivalences, \excise{for all $g,h$, we conclude that}
$\nu(X^{g,h}\subset X^k)\cong \nu(Y^{g,h}\subset Y^k)\quad\mbox{for all $k\in \langle g,h\rangle$}.$ We assume that $Y^{g,h}$ is connected (and otherwise make the same argument for connected components). The obstruction bundle $E_Y|_{Y^{g,h}}$ is a subbundle of $\nu(Y^{g,h}\subset Y=Y^{id})$ composed of those isotypic components $I_\lambda$ for which $a_\lambda(g)+a_\lambda(h)+a_\lambda((gh)^{-1})=2$. Similarly, the obstruction bundle $E_X|_{X^{g,h}}$ in the ($\langle g,h\rangle$-equivariantly) isomorphic vector bundle $\nu(X^{g,h}\subset X)$ consists of those isotypic components $I'_{\lambda'}$ for which $a_{\lambda'}(g)+a_{\lambda'}(h)+a_{\lambda'}((gh)^{-1})=2$. Since the representations of $\langle g,h\rangle$ on each fiber are isomorphic, the obstruction bundles are isomorphic. Note that these bundles are $T$-equivariantly isomorphic, because the total normal bundles are.
Denote the equivariant Euler class of $E_X|_{X^{g,h}}$ by $\epsilon_X$ and that of $E_Y|_{Y^{g,h}}$ by $\epsilon_Y$.

The isomorphism of equivariant bundles implies that,
under the map $f^*:H_T^*(Y^{g,h};\Z)\rightarrow H_T^*(X^{g,h};\Z)$, we have $f^*(\epsilon_X)=\epsilon_Y.$  This in turn implies that for all $a\in NH_T^{*,g}(Y)$ and $b\in NH_T^{*,h}(Y)$, we have
\begin{align*}
f^*(a\smile b)&= f^{*}[(\overline{e}_{3,Y})_*(e_{1,Y}^*(a)\cdot e_{2,Y}^*(b)\cdot\epsilon_Y)]\\
&= (\overline{e}_{3,X})_*[f^{*}(e_{1,Y}^*(a)\cdot e_{2,Y}^*(b)\cdot \epsilon_Y)]\\
&= (\overline{e}_{3,X})_*[e_{1,X}^*(f^*(a))\cdot e_{2,X}^*(f^*(b))\cdot f^{*}(\epsilon_Y)]\\
&= (\overline{e}_{3,X})_*[e_{1,X}^*(f^*(a))\cdot e_{2,X}^*(f^*(b))\cdot (\epsilon_X)]\\
&= f^*(a) \smile f^*(b),
\end{align*} as desired.\end{proof}

Chen-Ruan cohomology is not obviously a representation homotopy invariant in the same way that inertial cohomology is. A generic orbifold is described locally by isotropy groups and their representations at particular points in the orbifold (see \cite{CR}). However, these groups are not naturally subgroups of one larger group. To define a similar concept of maps between orbifolds, one must work with local charts (upstairs) and their gluing maps. However, a map between orbifolds $f: [X]\rightarrow [Y]$ may not induce a well-defined pull-back orbibundle $f^*E$ for an orbibundle $E$ over $[Y]$. See \cite{CR1} for more details on {\em good maps}. However, one immediate consequence of Theorem~\ref{th:homotopy} is the following corollary.
\begin{corollary}
Let $T$ act on a stably complex spaces $X$ and $Y$ with finite isotropy. If $X$ and $Y$ are $T$-representation homotopic, then $H_{CR}^*([X/T];\Z)\iso H_{CR}^*([Y/T];\Z).$
\end{corollary}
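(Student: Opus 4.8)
The plan is to deduce the corollary directly from Theorem~\ref{th:homotopy}, using the identification of inertial cohomology with Chen--Ruan cohomology in the locally free case recalled at the start of Section~\ref{CRcohomology}. The hypothesis that $T$ acts on $X$ (and on $Y$) with finite isotropy is precisely the statement that $T$ acts locally freely, so $[X/T]$ and $[Y/T]$ are orbifolds. Since $X$ and $Y$ are stably complex, each normal bundle $\nu(X^{g,h}\subset X^g)$ and each of its $Y$-counterparts is honestly complex, so the obstruction bundles of Definition~\ref{def:obstructionbundle} carry well-defined integral equivariant Euler classes and the inertial product of Equation~(\ref{eq:inertialproduct}) is defined over $\Z$. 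This is exactly the setting in which \cite{GHK} produces grading-preserving ring isomorphisms
$$
NH_T^{*,\diamond}(X;\Z)\iso H_{CR}^*([X/T];\Z)\qquad\mbox{and}\qquad NH_T^{*,\diamond}(Y;\Z)\iso H_{CR}^*([Y/T];\Z).
$$

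Given these, the corollary is immediate: by Theorem~\ref{th:homotopy} the $T$-representation homotopy equivalence $f\colon X\to Y$ induces a ring isomorphism $f^*\colon NH_T^{*,\diamond}(Y;\Z)\to NH_T^{*,\diamond}(X;\Z)$, and composing $f^*$ with the two displayed identifications yields the asserted isomorphism $H_{CR}^*([X/T];\Z)\iso H_{CR}^*([Y/T];\Z)$. All of the substantive work has already been done inside Theorem~\ref{th:homotopy} (matching the normal bundles, hence the obstruction bundles, hence the virtual fundamental classes); the corollary is merely a translation of that statement across the dictionary between inertial and Chen--Ruan cohomology.

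The only points needing attention — and the mildest form of an ``obstacle'' — are two bookkeeping checks. First, one should confirm that $f^*$ is an isomorphism of \emph{graded} rings for the real-valued $*$-grading, not merely of the underlying rings, since the degree shift on the $g$-twisted sector of Chen--Ruan cohomology is governed by the age of $g$, i.e.\ by the logweights $a_\lambda(g)$ of Definition~\ref{def:logweight} appearing in the isotypic decomposition of $\nu(X^g\subset X)$. But taking $h=e$ and $k=e$ in Definition~\ref{de:representationhomotopy}, a $T$-representation homotopy preserves $\nu(X^g\subset X)$ up to $T$-equivariant isomorphism — indeed the proof of Theorem~\ref{th:homotopy} gives $\nu(X^g\subset X)\iso\nu(Y^g\subset Y)$ as $T$-bundles — so the ages agree on corresponding sectors and the degree-preserving isomorphism $f^g\colon H_T^*(Y^g)\to H_T^*(X^g)$ respects the shift. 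Second, the finite sums $NH_T^{*,\diamond}=NH_T^{*,\Gamma}$ run over the subgroup $\Gamma$ generated by the isotropy; since $f$ restricts to a homotopy equivalence $X^g\simeq Y^g$ for every $g\in T$, we have $X^g\ne\emptyset$ iff $Y^g\ne\emptyset$, so the index sets and hence the groups $\Gamma_X$ and $\Gamma_Y$ match under $f$, and no sector is created or destroyed. With these two checks dispatched, the composite is the desired isomorphism of Chen--Ruan cohomology rings.
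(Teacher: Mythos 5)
Your proposal is correct and is exactly the argument the paper intends: the corollary is stated as an immediate consequence of Theorem~\ref{th:homotopy} combined with the identification $NH_T^{*,\diamond}(Z)\iso H_{CR}^*([Z/T])$ for locally free actions recalled at the start of Section~\ref{CRcohomology}, and your two bookkeeping checks (grading via the logweights, matching of twisted sectors) are sensible elaborations of details the paper leaves implicit.
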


\excise{
THIS IS HOW TO MAKE SOME COOL TABLES AND HOW TO INSERT FIGURES
\begin{table}[ht]
\caption{}\label{eqtable}
\renewcommand\arraystretch{1.5}
\noindent\[
\begin{array}{|c|c|c|}
\hline
&{-\infty}&{+\infty}\\
\hline
{f_+(x,k)}&e^{\sqrt{-1}kx}+s_{12}(k)e^{-\sqrt{-1}kx}&s_{11}(k)e^
{\sqrt{-1}kx}\\
\hline
{f_-(x,k)}&s_{22}(k)e^{-\sqrt{-1}kx}&e^{-\sqrt{-1}kx}+s_{21}(k)e^{\sqrt
{-1}kx}\\
\hline
\end{array}
\]
\end{table}

\begin{figure}[tb]
\blankbox{.6\columnwidth}{5pc}
\caption{This is an example of a figure caption with text.}
\label{firstfig}
\end{figure}

\begin{figure}[tb]
\blankbox{.75\columnwidth}{3pc}
\caption{}\label{otherfig}
\end{figure}

}

\bibliographystyle{amsalpha}

\end{document}